\newtheorem{theorem}{Theorem}[section]
\newtheorem{claim}[theorem]{Claim}
\newtheorem{conjecture}[theorem]{Conjecture}
\newtheorem{corollary}[theorem]{Corollary}
\newtheorem{lemma}[theorem]{Lemma}
\DeclarePairedDelimiter\set{\{}{\}}
\newcommand{\R}{\ensuremath{\mathbb{R}}}
\newcommand{\Z}{\ensuremath{\mathbb{Z}}}
\newcommand{\Ball}[1]{B_2^n(#1)}
\newcommand{\lat}{\mathcal{L}}
\DeclareMathOperator{\vol}{vol}
\begin{document}

\begin{frontmatter}[classification=text]


\author[lovett]{Shachar Lovett\thanks{Supported by an NSF CAREER award 1350481, an NSF CCF award 1614023, and a Sloan fellowship.}}
\author[regev]{Oded Regev\thanks{Supported by the Simons Collaboration on Algorithms and Geometry and by the National Science Foundation (NSF) under Grant No.~CCF-1320188. Any opinions, findings, and conclusions or recommendations expressed in this material are those of the authors and do not necessarily reflect the views of the NSF.}}

\begin{abstract}
The Polynomial Freiman-Ruzsa conjecture is one of the central open problems in additive combinatorics.
If true, it would give tight quantitative bounds relating combinatorial and algebraic notions of approximate subgroups.
In this note, we restrict our attention to subsets of Euclidean space. In this regime,
the original conjecture considers approximate algebraic subgroups as the set of lattice points in a convex body.
Green asked in 2007 whether this can be simplified to a generalized arithmetic progression, while
not losing more than a polynomial factor in the underlying parameters.
We give a negative answer to this question, based on a
recent reverse Minkowski theorem combined with estimates for random lattices.
\end{abstract}
\end{frontmatter}

\section{Introduction}

The Polynomial Freiman-Ruzsa conjecture, first suggested by Katalin Marton, would, if true, give a polynomial relation between combinatorial
and algebraic notions of approximate groups. In this note, we restrict our attention to subsets of Euclidean space.

Let $A$ be a finite subset of $\R^n$.
Its \emph{Minkowski sumset} is $A+A=\{a_1+a_2: a_1,a_2 \in A\}$.
The \emph{doubling factor} of $A$ is $|A+A|/|A|$.
Sets of small doubling can be viewed as a combinatorial notion of ``approximate subgroups.'' In $\R^n$, a natural example is that of lattice points in a symmetric convex body, as the following standard fact shows.

\begin{claim}
\label{clm:convex}
Let $B \subset \R^n$ be a symmetric (i.e., $B=-B$) convex body, let $\lat \subset \R^n$ be an $n$-dimensional lattice, and take $A = \lat \cap B$. Then $|A+A| \le 5^n |A|$.
\end{claim}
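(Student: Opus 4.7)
The plan is to first observe that $A+A$ is contained in $\lat\cap 2B$, since the lattice is closed under addition and $a_1,a_2\in B$ implies $a_1+a_2\in 2B$ by convexity and symmetry. This reduces the task to proving the purely geometric estimate $|\lat\cap 2B|\le 5^n|\lat\cap B|$, after which the claim follows from $|A+A|\le|\lat\cap 2B|$.

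To bound $|\lat\cap 2B|$ I will use a greedy covering argument in the spirit of Ruzsa. I will select points $v_1,v_2,\dots,v_k\in\lat\cap 2B$ one at a time, at each stage choosing $v_i$ so that $v_i-v_j\notin B$ for all $j<i$, and stopping when no further such point exists. By the stopping condition, every $w\in\lat\cap 2B$ satisfies $w-v_i\in B$ for some $i$, so $w\in v_i+(\lat\cap B)$, which gives the covering
\[
\lat\cap 2B\;\subseteq\;\bigcup_{i=1}^{k}\bigl(v_i+(\lat\cap B)\bigr)
\]
and hence $|\lat\cap 2B|\le k\,|\lat\cap B|$.

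It remains to bound $k$. Here I will exploit the selection rule: for $i\ne j$, the condition $v_i-v_j\notin B$ implies that the half-body translates $v_i+\tfrac{1}{2}B$ and $v_j+\tfrac{1}{2}B$ are disjoint, because any common point $x$ would give $v_i-v_j=(x-v_j)-(x-v_i)\in \tfrac{1}{2}B-\tfrac{1}{2}B=B$ by symmetry and convexity of $B$. Since each $v_i\in 2B$, all these disjoint translates lie inside $2B+\tfrac{1}{2}B=\tfrac{5}{2}B$, and comparing volumes yields
\[
k\cdot\frac{\vol(B)}{2^n}\;\le\;\vol\!\bigl(\tfrac{5}{2}B\bigr)\;=\;\frac{5^n\vol(B)}{2^n},
\]
so $k\le 5^n$. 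Combining the steps gives $|A+A|\le|\lat\cap 2B|\le 5^n|\lat\cap B|=5^n|A|$.

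The argument is essentially routine once the covering framework is set up; the only mildly delicate point is the volume comparison, where one must remember to inflate each $v_i$ by $\tfrac{1}{2}B$ rather than $B$ itself, so that the disjointness coming from $v_i-v_j\notin B$ can be translated into disjoint volumes inside $\tfrac{5}{2}B$. This gives the factor $5^n$ rather than a worse constant.
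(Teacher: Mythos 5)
Your proof is correct and is essentially the same as the paper's: both reduce to $A+A\subseteq\lat\cap 2B$, take a maximal packing of points in $\lat\cap 2B$ whose translates of $\tfrac12 B$ are disjoint (your greedy selection with $v_i-v_j\notin B$ is exactly this condition), use maximality to cover $\lat\cap 2B$ by translates of $\lat\cap B$, and bound the number of packing points by $5^n$ via a volume comparison inside $\tfrac52 B$.
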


\begin{proof}
Note that $A+A \subseteq \lat \cap 2B$. Next, let $D$ be a maximal subset of
$\lat \cap 2B$ satisfying that the sets in $\{x+B/2 : x \in D\}$ are disjoint.
Then on one hand, by the maximality of $D$, $\lat \cap 2B \subseteq D + (\lat \cap B)$.
On the other hand, by a volume packing argument, $|D| \le 5^n$.
Therefore,
\[
|A+A| \le |\lat \cap 2B| \le |D| \cdot |A| \le 5^n |A| \; . \qedhere
\]
\end{proof}

We remark that Claim~\ref{clm:convex} fails for non-symmetric convex bodies.
Take, for instance, $B \subset \R^3$ to be the convex hull of the four points $(N,0,0),(-N,0,0),(0,N,1),(0,-N,1)$
for some large integer $N$, $\lat = \Z^3$ and set $A = \lat \cap B$. Then $|A|=4N+2$ but $|A+A| \ge (2N+1)^2$.

Freiman~\cite{Freiman} showed that sets of small doubling must be contained in a low-dimensional affine subspace. Concretely, if $|A+A| \le K |A|$ then $A$
is supported on a subspace of dimension $2K$. However, there is an exponential gap between this bound (which is tight) and the example of lattice points
in convex bodies. The Polynomial Freiman-Ruzsa (PFR) conjecture is an attempt to bridge this gap.
One natural formulation is the following (see~\cite{GreenBlog} for a further discussion).

\begin{conjecture}[PFR conjecture in $\R^n$]
\label{conj:PFR}
There exists an absolute constant $c>0$ such that the following holds.
Let $A \subset \R^n$ be a set with $|A+A| \le K |A|$. Then for some $d \le c \log K$ there exist
\begin{enumerate}
\item a $d$-dimensional lattice $\lat \subset \R^d$,\footnote{One can restrict without loss of generality to $\lat=\Z^d$.}
\item a symmetric convex body $B \subset \R^d$,
\item a linear map $\varphi:\R^d \to \R^n$, and
\item a set $X \subset \R^n$ of size $|X| \le K^c$
\end{enumerate}
such that $|\lat \cap B| \le |A|$ and $A \subset \varphi(\lat \cap B)+X$.
\end{conjecture}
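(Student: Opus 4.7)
The plan is to establish the PFR conjecture by sharpening the classical Freiman--Ruzsa structure theorem. Freiman's theorem already yields a conclusion of the desired \emph{form}---every $A$ with $|A+A| \le K|A|$ is contained in the image of $\lat \cap B$ for some symmetric convex body $B \subset \R^d$---but with rank $d = O(K)$ rather than the required $d = O(\log K)$. The target is thus an exponential improvement in the dimension bound, at the cost of allowing a polynomial-sized covering set $X$.

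First I would apply the Pl\"unnecke--Ruzsa inequalities to control the iterated sumsets $|kA - \ell A| \le K^{O(k+\ell)}|A|$, and then invoke the Balog--Szemer\'edi--Gowers theorem to pass to a large subset $A' \subseteq A$ with a strong additive quadruple count, absorbing the multiplicative loss into the polynomial factor $K^c$. The aim is to extract from $A'$ a Bohr-type set with only $d = O(\log K)$ frequencies, whose $K^{O(1)}$ translates cover $A$.

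Next, I would convert this Bohr-type structure into the required pair $(\lat, B)$: view the spectrum as $O(\log K)$ directions in $\R^n$, set $\lat \subset \R^d$ to be the lattice dual to these frequencies, and let $B$ be the symmetric convex body cut out by the corresponding Bohr radii. The covering set $X \subset \R^n$ and the linear map $\varphi$ would then arise from the coset structure on the orthogonal complement of the spectrum. Claim~\ref{clm:convex} guarantees that the resulting $\lat \cap B$ has small doubling, so the only nontrivial verification is the size bound $|\lat \cap B| \le |A|$, which can be enforced by shrinking $B$ and paying for the shrinkage in $|X|$. This geometric step should be essentially direct once the low-dimensional Bohr structure is in hand.

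The main obstacle, and the reason the PFR conjecture remains open in $\R^n$, is the dimension-reduction step. The standard route from a general small-doubling set to a Bohr set of polylogarithmic dimension proceeds via iterated Bogolyubov--Chang arguments, which inherently lose a factor polynomial in $K$ per dimension reduced and thus do not by themselves bring the rank anywhere near $O(\log K)$. Overcoming this barrier would likely require either an entropy-based argument tailored to log-concave distributions on $\R^n$ (adapting techniques developed for the finite-field analogue) or a fundamentally new geometric compression of Freiman structures. In either case the Euclidean setting raises additional difficulties beyond the finite-field one, because one must simultaneously control both the combinatorial size $|\lat \cap B|$ and the lattice-body geometry, and the lack of a natural uniform measure on $\R^n$ obstructs the entropic route.
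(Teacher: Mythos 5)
The statement you are addressing is Conjecture~\ref{conj:PFR}, and the paper contains no proof of it: it is the Polynomial Freiman--Ruzsa conjecture itself, stated as an open problem. The paper's actual contribution (Theorem~\ref{thm:main}) is a \emph{counterexample} to the stronger GAP formulation, Conjecture~\ref{conj:PFRstrong}, obtained from a random lattice intersected with a Euclidean ball via the reverse Minkowski theorem; it neither proves nor claims to prove Conjecture~\ref{conj:PFR}. So there is no paper proof to match your proposal against, and your proposal cannot be accepted as a proof of the statement.

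More importantly, your outline does not close the gap --- and you concede as much in your final paragraph. The step you label ``extract from $A'$ a Bohr-type set with only $d=O(\log K)$ frequencies, whose $K^{O(1)}$ translates cover $A$'' is not a step toward the conjecture; it essentially \emph{is} the conjecture, restated in Bohr-set language. The tools you invoke are known not to deliver it: Pl\"unnecke--Ruzsa controls iterated sumsets but says nothing about dimension; Balog--Szemer\'edi--Gowers runs in the wrong direction here (it converts large additive energy into a subset of small doubling, which you already have, so it buys nothing); and Bogolyubov--Chang-type arguments produce Bohr sets of rank polynomial in $K$ (the best known refinements, due to Sanders, reach rank polylogarithmic in $K$ only with quasi-polynomial, not polynomial, covering losses). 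The subsequent geometric conversion of a low-dimensional Bohr structure into $\varphi(\lat\cap B)+X$ is indeed routine, but it is contingent on the dimension-reduction step that remains unproven. As written, the proposal is a research plan whose critical ingredient is the open problem itself, so the verdict is a genuine gap: the central claim $d\le c\log K$ with $|X|\le K^c$ is never established.
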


Green~\cite{GreenBlog} asked (and speculated that the answer is negative) whether Conjecture~\ref{conj:PFR} could potentially be strengthened,
where $\varphi(\lat \cap B)$ is replaced by a more restricted object, a generalized arithmetic progression (GAP).
A $d$-dimensional GAP $G \subset \R^n$ is a set of the form
\begin{equation}
\label{eq:gap}
G = \set[\Big]{x_0+\sum_{i=1}^d \alpha_i x_i:  a_i \le \alpha_i \le b_i, \; \alpha_i \in \Z} \; ,
\end{equation}
where $x_0,x_1,\ldots,x_d \in \R^n$ and $a_1,\ldots,a_d,b_1,\ldots,b_d \in \Z$. By a slight abuse of notation, the size of a GAP $G$ is its size as a multiset, namely $|G|=\prod_{i=1}^d (b_i-a_i+1)$.

As one can observe, a GAP $G$ can be written as $\varphi(\lat \cap B)+x$ as defined in Conjecture~\ref{conj:PFR}.
More precisely, assuming that $b_i-a_i$ is even for all $i$,
we can take $\lat=\Z^d$, $B=\prod_{i=1}^d [-(b_i-a_i)/2,(b_i-a_i)/2]$, $\varphi(e_i)=x_i$ where $e_1,\ldots,e_d$ are the standard unit vectors in $\R^d$, and $x=x_0+\sum_i (a_i+b_i)x_i/2$.
Moreover, GAPs have a simpler
combinatorial structure than the general case of linear images of lattice points in a convex body. As such, it will be pleasing if Conjecture~\ref{conj:PFR} can be simplified as follows.

\begin{conjecture}[PFR conjecture in $\R^n$; GAP version]
\label{conj:PFRstrong}
There exists an absolute constant $c>0$ such that the following holds.
Let $A \subset \R^n$ be a set with $|A+A| \le K |A|$. Then there exist
\begin{enumerate}
\item a $d$-dimensional GAP $G \subset \R^n$ of dimension $d \le c \log K$;
\item a set $X \subset \R^n$ of size $|X| \le K^c$,
\end{enumerate}
such that $|G| \le |A|$ and $A \subset G+X$.
\end{conjecture}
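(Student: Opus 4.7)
The plan is to construct a counterexample of the form $A = \lat \cap \Ball{r}$ for a suitable $n$-dimensional lattice $\lat$ of determinant $1$ and a suitable radius $r$; by Claim~\ref{clm:convex} this automatically satisfies $|A+A|\le 5^n|A|$, so $K \coloneqq |A+A|/|A| \le 5^n$. For every fixed $c>0$ I would choose $\lat$ and $r$ (with $n$ sufficiently large in terms of $c$) so that no decomposition $A \subseteq G+X$ with $G$ a $d$-dimensional GAP of size $|G|\le |A|$, $d \le c\log K$, and $|X|\le K^c$ can exist.

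Suppose such a covering is given. Since a $d$-dimensional GAP lies in a $d$-dimensional affine subspace $x_0 + V$, the hypothesis $A \subseteq G+X$ witnesses that $A$ lies in at most $|X|$ affine cosets of $V$. Averaging over cosets produces a subset $A' \subseteq A$ contained in a single coset of $V$, of size at least $|A|/|X|$; translating a basepoint of $A'$ to the origin yields $A' - A' \subseteq \lat \cap V$ with all elements in $\Ball{2r}$. Hence
\[
|\lat \cap V \cap \Ball{2r}| \;\ge\; |A' - A'| \;\ge\; |A'| \;\ge\; |A|/K^c.
\]
It therefore suffices to exhibit a single $\lat$ and $r$ for which this inequality fails for every linear subspace $V \subseteq \R^n$ of dimension at most $c \log K$.

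To obtain the required upper bound on $|\lat \cap V \cap \Ball{2r}|$, I would take $\lat$ to be distributed according to the Haar probability measure on $\mathrm{SL}_n(\R)/\mathrm{SL}_n(\Z)$. Standard estimates for random lattices imply that, with high probability, after a mild rescaling every sublattice $\lat' \subseteq \lat$ satisfies $\det(\lat') \ge 1$ --- the \emph{no dense sublattices} hypothesis. This hypothesis is inherited by every intersection $\lat \cap V$, so the reverse Minkowski theorem applies and bounds the Gaussian mass (and hence the ball count) of $\lat \cap V$. A short calculation converts this to a bound of the rough shape $|\lat \cap V \cap \Ball{2r}| \le \exp\bigl(O(r^2/\log d)\bigr)$. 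Choosing $r$ in the window $r \asymp \sqrt{n\log n}$ makes $|A|$ comparable to $(\log n)^{n/2}$, which is super-exponential in $n$, while $K^c \cdot \exp(O(r^2/\log d))$ remains only singly exponential in $n$; for $n$ large enough in terms of $c$ the target inequality fails and we reach the desired contradiction.

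The main obstacle is in this last step: extracting from reverse Minkowski a quantitative upper bound on $|\lat \cap V \cap \Ball{2r}|$ that is uniform over every subspace $V$ of dimension at most $c \log K$, and matching it against a sufficiently strong lower bound on $|A|$ to beat an arbitrary prescribed $c$. Concretely this amounts to verifying that (a) a Haar-random lattice satisfies the no-dense-sublattice hypothesis with high probability, (b) the hypothesis descends cleanly to every slice $\lat \cap V$, and (c) the theta-function bound can be optimized in the regime $r \asymp \sqrt{n \log n}$, where $A$ is large but $\lat$ is still sparse enough at scale $r$ for the theorem to bite.
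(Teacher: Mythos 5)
Your overall framework matches the paper's: $A=\lat\cap\Ball{r}$ for a Haar-random unimodular lattice, Claim~\ref{clm:convex} for the doubling bound, an averaging step over $X$, the Shapira--Weiss determinant estimates feeding into the reverse Minkowski theorem, and a Blichfeldt--van der Corput lower bound on $|A|$. But there is a genuine gap in the central reduction. You replace the GAP $G$ by its affine hull $x_0+V$ and never use the hypothesis $|G|\le|A|$ again; the contradiction you seek is that $|\lat\cap V\cap\Ball{2r}|\ge |A|/K^c$ must fail for every subspace $V$ of dimension at most $c\log K$. The problem is that $\log K$ is of order $n$ for this example (the only available bound is $K\le 5^n$, and the doubling really is exponential in $n$), so for a constant $c$ of order $1$ the conjecture permits GAPs of dimension $d$ comparable to $n$ and even equal to $n$. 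When $\dim V$ is close to $n$ the inequality you want to refute is simply true---for $V=\R^n$ the left-hand side contains $A$ itself---and even for $\dim V=(1-o(1))n$ the determinant bound only gives $R=c_1 n^{(1-\dim V/n)/2}$, too close to a constant for the reverse Minkowski count to drop below $|A|/K^c$. So your argument rules out only GAPs of dimension up to roughly $n/2$, not up to $c\log K$, and hence does not refute the conjecture for arbitrary $c$.

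The paper closes exactly this gap by using the GAP structure and the size bound $|G|\le|A|$ rather than only the affine hull. Sorting the side lengths $b_i-a_i$ in non-increasing order and fixing the last $d-n/2+1$ coordinates decomposes $G$ into slices $G_t$, each contained in an $(n/2)$-dimensional subspace, where your reverse-Minkowski argument does apply and yields $|A\cap G_t|\le 2^n$; the number of slices is $\prod_{i\ge n/2}(b_i-a_i+1)\le |G|^{1-(n/2-1)/d}\le |A|^{1-(n/2-1)/d}$ precisely because the fixed coordinates are the shortest ones. Multiplying these bounds gives $|A\cap G|\le 2^n\cdot|A|^{1-n/(3d)}$, which beats $K^{-c}|A|$ for every $d\le cn$ once $|A|\ge n^{n/8}$. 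This slicing step is the missing idea; without it (or an equivalent device) the proposal cannot handle the high-dimensional GAPs that the conjecture allows. The remaining points you flag as obstacles (the no-dense-sublattice property of random lattices, its inheritance by slices, and the conversion of the reverse Minkowski bound to a point count) are indeed handled essentially as you describe.
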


In this note we refute Conjecture~\ref{conj:PFRstrong}. We show that the bound on $|X|$ cannot be polynomial in $K$---it is at least of the order of $K^{c' \log \log K}$
for some $c'>0$. We note that our example is of the form $\lat \cap B$ as defined in Conjecture~\ref{conj:PFR}, so it does not shed new light on the original conjecture.

Let $A,G,X$ be as in Conjecture~\ref{conj:PFRstrong}. By an averaging argument, there exists an $x \in X$ such that
\[
|A \cap (G+x)| \ge K^{-c} |A| \; .
\]
Note that $G+x$ is also a GAP. Thus, the following theorem is sufficient to rule out Conjecture~\ref{conj:PFRstrong}.
Here and below, by a ``random $n$-dimensional lattice'' we mean a lattice chosen
from the unique probability measure over the set of determinant-one lattices in $\R^n$ that is invariant under $\mathrm{SL}_n(\R)$~\cite{Siegel45}.

\begin{theorem}
\label{thm:main}
For any $c \ge 1$ the following holds.
Let $B \subset \R^n$ be a Euclidean ball of radius $n^{5/8}$
and $\lat \subset \R^n$ be a random $n$-dimensional lattice.
Set $A = \lat \cap B$, and recall from Claim~\ref{clm:convex} that its doubling factor
satisfies $K \le 5^n$.
Then with probability tending to $1$ as $n \to \infty$ over the choice of $\lat$, the following holds.
For any $d$-dimensional GAP $G \subset \R^n$ with $d \le c n$ and
$|G| \le |A|$,
\[
|A \cap G| \le n^{-n/(25 c)} |A| \le K^{-(1/100 c) \log\log K} |A|.
\]
\end{theorem}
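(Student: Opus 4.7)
My plan is to proceed in four steps: (i) estimate $|A|$, (ii) reduce $|A \cap G|$ to a lattice-point count inside a ball for a sublattice $\lat' \subseteq \lat$, (iii) bound this count by the determinant of $\lat'$, and (iv) lower-bound sublattice determinants of a random lattice using Rogers' mean-value formulas.

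For step (i), I would invoke Siegel's mean value theorem to get $\mathbb{E}[|A|] = \vol(B) + 1 = (2\pi e)^{n/2 + o(n)} n^{n/8}$, together with a Rogers-style second-moment concentration to conclude $|A| \ge \vol(B)/2$ with probability $1 - o(1)$. For step (ii), I would note that the points of a $d$-dimensional GAP $G$ lie in the coset $x_0 + \Gamma$, where $\Gamma = \Z\langle x_1, \ldots, x_d\rangle \subset \R^n$ has rank $d' \le d$; choosing any $v_0 \in A \cap G$ and using $\|w - v_0\| \le 2 n^{5/8}$ for every $w \in A \cap G$, this yields
\[
|A \cap G| \;\le\; \bigl|\lat_G \cap \Ball{2 n^{5/8}}\bigr|,
\]
where $\lat_G := \lat \cap \mathrm{span}(\Gamma)$ is a primitive sublattice of $\lat$ of rank $d'$. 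For step (iii), a standard packing argument inside $\mathrm{span}(\Gamma)$ gives $|\lat_G \cap \Ball{2n^{5/8}}| \le O(1)^{d'} V_{d'}(2n^{5/8})/\det(\lat_G)$ whenever the right-hand side is at least one, where $V_{d'}(R)$ denotes the $d'$-dimensional Euclidean ball volume.

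Step (iv) is the crux. I would apply Rogers' higher-moment formulas to upper-bound the expected number of primitive rank-$d'$ sublattices of $\lat$ with determinant at most any prescribed level $T$. Choosing $T = T_{d'}$ so that this expectation is $o(1/n)$, and taking a union bound over $d' \le cn$, with probability $1 - o(1)$ every primitive sublattice of $\lat$ of rank $d' \le cn$ would satisfy $\det \ge T_{d'}$. I expect $T_{d'}$ to be of order $(n/(2\pi e))^{d'/2}$ up to sub-exponential factors, matching the scale at which successive minima of a random det-one lattice cluster. Combining everything, a direct computation of $V_{d'}(2n^{5/8})/(T_{d'} \cdot |A|)$---which decays as roughly $n^{-(3c+1)n/8}$ at the extremal value $d' = cn$ and faster for smaller $d'$---would yield $|A \cap G|/|A| \le n^{-n/(25c)}$ for all admissible $d'$. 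The second displayed inequality is immediate from $K \le 5^n$, which gives $\log \log K = \log n + O(1)$.

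The main obstacle will be step (iv): extracting the correct threshold $T_{d'}$ from Rogers' formula uniformly over $d' \le cn$, and verifying that it is simultaneously (a) small enough so that the expected count of sublattices below it is $o(1/n)$, and (b) large enough so that the final ratio beats $n^{-n/(25c)}$ throughout the range of admissible $d'$. The extremal regime $d' = cn$ is the tightest and needs the sub-exponential factors to be tracked carefully; smaller $d'$ give substantial slack, which makes the quantitative balance comfortable for every $c \ge 1$.
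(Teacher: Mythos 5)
Your step (i) is fine (the paper gets the same lower bound $|A|\ge 2^{-n}\vol(B)\ge n^{n/8}$ deterministically from the Blichfeldt--van der Corput theorem, with no need for a second-moment argument), but the core of your argument has two genuine gaps. First, your reduction in step (ii) discards the hypothesis $|G|\le|A|$ entirely and bounds $|A\cap G|$ only through $\lat_G=\lat\cap\mathrm{span}(\Gamma)$. Since the theorem allows $d\le cn$ with $c\ge 1$, the GAP can have full rank, in which case $\lat_G=\lat$, $\det(\lat_G)=1$, and $|\lat\cap\Ball{2n^{5/8}}|\ge|A|$, so the bound is vacuous; indeed, without using $|G|\le|A|$ the statement is simply false (a full-rank GAP with huge side lengths contains all of $A$). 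Relatedly, your guess $T_{d'}\approx(n/(2\pi e))^{d'/2}$ in step (iv) is unattainable once $d'$ is proportional to $n$: the correct scale (Thunder, Shapira--Weiss) is $\det(\lat\cap W)^{1/\dim W}\gtrsim n^{(1-\dim W/n)/2}$, which degenerates to $O(1)$ as $\dim W\to n$, so the computation at $d'=cn$ does not give the claimed decay. The paper's missing idea is a slicing argument: sort the GAP side lengths in non-increasing order, fix the values of all but the $n/2-1$ widest directions, bound each resulting slice $G_t$ (which lies in an $(n/2)$-dimensional subspace) by $2^n$, and observe that the number of slices is at most $\prod_{i\ge n/2}(b_i-a_i+1)\le |G|^{1-(n/2-1)/d}\le|A|^{1-(n/2-1)/d}$ --- this is exactly where $|G|\le|A|$ enters and where the power saving over $|A|$ comes from.

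Second, the packing bound in step (iii), $|\lat_G\cap\Ball{R}|\le O(1)^{d'}V_{d'}(R)/\det(\lat_G)$ ``whenever the right-hand side is at least one,'' is false: take $d'=2$, the lattice generated by $(\epsilon,0)$ and $(0,1/\epsilon)$, and $R=1$; the determinant is $1$ and $V_2(1)=\pi$, but the ball contains about $2/\epsilon$ lattice points. Point counts are not controlled by $\vol/\det$ unless one controls the successive minima, or equivalently the determinants of \emph{all} lattice subspaces --- which is precisely the content of the reverse Minkowski theorem (Theorem~\ref{thm:RM}) that the paper combines with Theorem~\ref{thm:det-lb-haar} to get the per-slice bound $|\lat\cap W_t\cap\Ball{n^{5/8}}|\le(3/2)\exp(500(\log n/c_1)^2 n^{3/4})\le 2^n$. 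Your idea of proving sublattice-determinant lower bounds via Rogers' moment formulas is a plausible substitute for the citation of Shapira--Weiss, but it cannot substitute for the reverse Minkowski step, and neither ingredient rescues the overall strategy without the slicing.
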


We note that the bound on the intersection of lattice points in a convex body with a GAP in Theorem~\ref{thm:main}
is tight, up to the constant factors.
Lemma 3.33 in~\cite{tao2006additive} shows that if $A = \lat \cap B$ where $\lat$ is a lattice and $B$ is any symmetric convex body in $\R^n$,
then there exists an $n$-dimensional GAP $G \subset A$ such that $|A \cap G| \ge (c'' n)^{-7n/2} |A|$ for some constant $c''>0$.


\section{Preliminaries}

A rank-$d$ \emph{lattice} $\lat \subset \R^n$ is the set of integer linear combinations of $d$ linearly independent vectors
$B = (b_1,\ldots, b_d)$,
\[
\lat = \Big\{ \sum_{i=1}^d a_i b_i \ : \ a_i \in \Z \Big\}
\; .
\]
The \emph{determinant} of the lattice is given by $\det(\lat) = \det(B^t B)^{1/2}$, where we view $B$ as an $n \times d$ matrix.
One can verify that the determinant is independent of the choice of basis of a lattice.
We say that a subspace $W \subset \R^n$ is
a \emph{lattice subspace} of $\lat$ if it is spanned by vectors in the lattice $\lat$. We denote by $\Ball{r}$ the Euclidean ball of radius $r$ in $\R^n$ centered at the origin.


\section{Properties of Random Lattices}

We will use the following lower bound on the determinants
of sublattices of a random lattice. The formulation below is due
to~\cite{ShapiraW14}, which in turn is based on the estimates
of~\cite{Thunder98}.

\begin{theorem}[{\cite[Proposition 3]{ShapiraW14}}]
\label{thm:det-lb-haar}
Let $\lat$ be a random $n$-dimensional lattice.
Then with probability tending to $1$ as $n \rightarrow \infty$, it holds
that for any lattice subspace $W$ of $\lat$,
\[
\det(\lat \cap W)^{1/\dim(W)} \geq c_{1} \cdot n^{(1-\dim(W)/n)/2} \; ,
\]
where $c_{1}>0$ denotes an absolute constant.
\end{theorem}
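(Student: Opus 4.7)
The plan is to deduce Theorem \ref{thm:det-lb-haar} from Thunder's generalization of Siegel's mean value formula, applied separately for each possible sublattice dimension and unified via a union bound over dimensions.

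First I would reduce the statement about lattice subspaces to a statement about primitive sublattices: a $k$-dimensional subspace $W \subset \R^n$ is a lattice subspace of $\lat$ if and only if $M := \lat \cap W$ is a primitive rank-$k$ sublattice of $\lat$, and then $\det(\lat \cap W) = \det M$. Hence it suffices to simultaneously lower-bound $\det M$ over every primitive sublattice $M \subset \lat$.

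For each $k \in \{1,\ldots,n-1\}$, let $N_k(T)$ denote the number of primitive rank-$k$ sublattices $M \subset \lat$ with $\det M \le T$. Thunder's formula provides an exact expression for $\mathbb{E}_\lat[N_k(T)]$ as an integral over the Grassmannian $\mathrm{Gr}_{n,k}(\R)$ weighted by Riemann zeta values; the upshot is a bound of the shape
\[
\mathbb{E}_\lat[N_k(T)] \;\le\; \gamma_{n,k}\, T^n,
\]
where $\gamma_{n,k}$ is an explicit constant. The exponent $n$ here is natural: already in $\Z^n$, the number of primitive rank-$k$ sublattices of determinant at most $T$ grows like $T^n$ (as one sees directly for $k=1$ from counting primitive lattice points in a ball of radius $T$, and for $k=n-1$ from duality).

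Given this, I would set the threshold $T_k := (c_1 n^{(1-k/n)/2})^k$ and apply Markov's inequality to obtain
\[
\Pr[N_k(T_k) \ge 1] \;\le\; \gamma_{n,k}\, T_k^n \;=\; \gamma_{n,k}\, c_1^{nk}\, n^{(nk-k^2)/2}.
\]
A union bound over $k = 1,\ldots,n-1$ then yields that, with probability tending to one, every primitive rank-$k$ sublattice $M$ of $\lat$ satisfies $\det M > T_k$, which after taking $k$-th roots is exactly the conclusion of the theorem (the case $k=n$ being trivial since $\det\lat = 1$).

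The main obstacle is verifying that
\[
\sum_{k=1}^{n-1} \gamma_{n,k}\, c_1^{nk}\, n^{(nk-k^2)/2} \;\longrightarrow\; 0
\qquad \text{as } n \to \infty,
\]
for some absolute constant $c_1 > 0$. This reduces to uniform asymptotic control of $\gamma_{n,k}$ across $k$, which is precisely the content of Thunder's estimates: the Grassmannian-volume factor and the product of zeta values that build $\gamma_{n,k}$ combine to yield a bound of the form $\gamma_{n,k} \le C^{nk}\, n^{-(nk-k^2)/2}$ for some absolute $C>0$. Inserting this bound makes each summand at most $(Cc_1)^{nk}$, so choosing $c_1 < 1/C$ makes the sum geometrically small in $n$ (the dominant terms being small $k$), closing the union bound. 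Getting this uniform control is where all of the substantive work lies; the rest is bookkeeping.
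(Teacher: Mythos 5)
There is nothing in the paper to compare against: Theorem~\ref{thm:det-lb-haar} is not proved here at all, but imported verbatim as a black box from \cite[Proposition~3]{ShapiraW14}, which in turn rests on the mean value estimates of \cite{Thunder98}. That said, your outline is essentially a correct reconstruction of the Shapira--Weiss argument: identify $k$-dimensional lattice subspaces with primitive rank-$k$ sublattices, bound the expected number $N_k(T)$ of such sublattices of determinant at most $T$ by a Thunder/Siegel-type mean value formula of the shape $\gamma_{n,k}T^n$, apply Markov with the threshold $T_k=(c_1 n^{(1-k/n)/2})^k$, and union bound over $k=1,\dots,n-1$ (with $k=n$ trivial since $\det\lat=1$, which forces $c_1\le 1$). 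You correctly locate where the substance lies: the uniform estimate $\gamma_{n,k}\le C^{nk}n^{-k(n-k)/2}$. This does hold, and it is worth recording why: the zeta factors contribute at most $2^{k}$, and the ratio of ball volumes $V(n-k+1)\cdots V(n)/(V(1)\cdots V(k))$ has logarithm $-\tfrac12\bigl(\sum_{j=n-k+1}^{n}j\log j-\sum_{j=1}^{k}j\log j\bigr)+O(nk)$ by Stirling; this bracket is symmetric under $k\mapsto n-k$ (as is the target exponent $k(n-k)$), so one may assume $k\le n/2$, where $\log(n-k)\ge\log n-\log 2$ gives the bracket $\ge k(n-k)\log n-O(nk)$. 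With that, each summand in your union bound is at most $(Cc_1)^{nk}$ and the sum is geometrically small once $c_1<1/C$. So the proposal is sound; its only gap relative to a complete proof is carrying out this Stirling computation (and quoting the mean value formula precisely), which is exactly the content of \cite{Thunder98} as packaged by \cite{ShapiraW14}.
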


We will also need the following ``reverse Minkowski'' theorem,
earlier conjectured by Dadush~\cite{DR16}.

\begin{theorem}[{\cite{RegevSD16}}]
\label{thm:RM}
Let $n \ge 2$ and $\lat \subset \R^n$ be a lattice satisfying that
for any lattice subspace $W$ of $\lat$,
$\det(\lat \cap W)^{1/\dim(W)} \geq R$. Then,
for any $r \ge R$,
\[
|\lat \cap \Ball{r}| \leq (3/2) \exp(500 (\log n  \cdot r/R)^2) \; .
\]
\end{theorem}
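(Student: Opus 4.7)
The plan is to reduce the point-counting bound to an upper bound on the Gaussian mass (theta function) of $\lat$, and then to establish that theta bound under the hypothesis on sublattice determinants. First, by homogeneity I may rescale $\lat$ so that $R=1$; the target becomes $|\lat \cap \Ball{r}| \le (3/2)\exp(500(\log n\cdot r)^2)$ for all $r\ge 1$. Next, the standard Gaussian majorant $\mathbf{1}[\|v\|\le r]\le e^{\pi(r^2-\|v\|^2)/s^2}$ yields, for any $s>0$,
\[
|\lat \cap \Ball{r}| \;\le\; e^{\pi r^2/s^2}\,\theta_\lat(s), \qquad \theta_\lat(s) \;\coloneqq\; \sum_{v\in\lat}e^{-\pi\|v\|^2/s^2}.
\]
Picking $s$ of order $1/\log n$ turns the prefactor $e^{\pi r^2/s^2}$ into precisely the exponential appearing in the conclusion, so the entire theorem reduces to showing that
\[
\theta_\lat(s)\;\le\;3/2\qquad\text{whenever } s\le c_0/\log n,
\]
for some absolute constant $c_0>0$, under the assumption that $\det(\lat\cap W)^{1/\dim W}\ge 1$ for every lattice subspace $W$.

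I would prove this theta bound by induction on $n=\dim\lat$, combining three tools. First, the Jacobi identity $\theta_\lat(s)=s^n(\det\lat)^{-1}\theta_{\lat^*}(1/s)$ (Poisson summation), which interchanges the regimes $s\to 0$ and $s\to\infty$ and, together with the hypothesis $\det\lat\ge 1$ obtained by taking $W=\R^n$, controls the large-$s$ regime via the dual lattice. Second, a fiber decomposition over any lattice subspace $W$: letting $\pi$ denote orthogonal projection onto $W^\perp$,
\[
\theta_\lat(s) \;=\; \sum_{u\in\pi(\lat)} e^{-\pi\|u\|^2/s^2}\,\theta_{(\lat\cap W)+v_u}(s),
\]
where $v_u$ is any chosen preimage in $\lat$; here $\lat\cap W$ inherits the hypothesis in dimension $\dim W<n$, and $\pi(\lat)$ is a lattice of determinant $\det\lat/\det(\lat\cap W)\ge 1$ to which the induction also applies. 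Third, log-convexity of theta functions of lattice cosets in $\log s$, which lets one dominate each shifted factor $\theta_{(\lat\cap W)+v_u}(s)$ by the unshifted $\theta_{\lat\cap W}(s)$ at a nearby comparison scale. Choosing $W$ to be an extremal sublattice---one achieving an appropriate saddle of the normalized log-theta---and applying induction to both $\lat\cap W$ and $\pi(\lat)$ closes the recursion.

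The main obstacle, and what makes the result genuinely difficult, lies in the accounting of this induction. A naive peeling-off of one sublattice at a time incurs a constant multiplicative loss in $\theta$ per step; over $n$ recursive steps this blows up to $\exp(\Omega(n))$, destroying the target $(\log n)^2$ dependence in the exponent. The crucial idea of Regev--Stephens-Davidowitz (anticipated by Dadush's conjecture) is to avoid greedy peeling entirely: one identifies the correct extremal sublattice simultaneously at every scale via a convex-analytic variational argument, and shows that the recursion effectively terminates in $O(\log n)$ rounds with only a constant loss per round. Formulating the right convexity/subadditivity statement for $\log\theta_\lat(\cdot)$ so that the sublattice-determinant hypothesis can be exploited at each scale without accumulating errors is the technical heart of the proof; the rest is relatively standard Fourier analysis on lattices.
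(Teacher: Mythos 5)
You should first note that within this paper Theorem~\ref{thm:RM} is not proved at all: it is quoted verbatim from~\cite{RegevSD16} and used as a black box, so the only ``proof'' the paper offers is the citation. Your opening reduction is the standard one and is essentially correct: after rescaling to $R=1$, the Gaussian majorant gives $|\lat\cap\Ball{r}|\le e^{\pi r^2/s^2}\,\theta_\lat(s)$, and taking $s$ of order $1/\log n$ converts the desired inequality into the claim that $\theta_\lat(s)\le 3/2$ whenever $s\le c_0/\log n$, for every lattice all of whose sublattices have determinant at least $1$. (Two small points: to recover the stated constant $500$ you need $c_0\ge\sqrt{\pi/500}$, so $c_0$ cannot be left floating; and in your fiber decomposition the coset should be $\lat\cap W$ shifted by the component of $v_u$ lying in $W$, not by $v_u$ itself. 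Also, the domination $\theta_{(\lat\cap W)+c}(s)\le\theta_{\lat\cap W}(s)$ follows directly from Poisson summation and needs no log-convexity.)

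The genuine gap is that the reduced claim is precisely the main theorem of Regev and Stephens-Davidowitz; it is the entire content of the result, not a lemma one can gesture at. Your sketch of its proof --- decompose over a lattice subspace, bound coset theta functions by unshifted ones, and induct on dimension --- is exactly the ``naive peeling'' that you yourself point out loses a constant factor per step and hence $\exp(\Omega(n))$ overall, which is fatal for a bound of the form $\exp(O((\log n\cdot r)^2))$. The proposed fix (``identify the correct extremal sublattice simultaneously at every scale via a convex-analytic variational argument,'' ``the recursion terminates in $O(\log n)$ rounds with constant loss per round'') describes the flavor of the argument in~\cite{RegevSD16} but contains no actual statement or proof: there is no formulation of the quantity being extremized, no argument for why a maximizer of the Gaussian mass over stable lattices has the structure needed to decompose, and no accounting showing the total loss stays bounded by an absolute constant. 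As written, the proposal establishes only the easy reduction and defers the technical heart of the theorem back to the very paper being cited, so it does not constitute a proof of the statement.
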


By combining Theorems~\ref{thm:det-lb-haar} and~\ref{thm:RM}, we obtain the following.

\begin{corollary}
\label{cor:pointcountingrandom}
Let $\lat$ be a random $n$-dimensional lattice.
Then with probability tending to $1$ as $n \rightarrow \infty$, it holds
that for any $n/2$-dimensional lattice subspace $W$ of $\lat$ and any $r \ge c_{1} \cdot n^{1/4}$,
\[
|\lat \cap W \cap \Ball{r}| \leq (3/2) \exp(500 (\log n \cdot r/(c_1 n^{1/4}))^2) \; .
\]
\end{corollary}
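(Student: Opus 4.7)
The plan is to combine the two theorems directly, essentially applying the reverse Minkowski bound to each sublattice $\lat \cap W$ viewed as a lattice inside $W$, after using Theorem~\ref{thm:det-lb-haar} to verify its hypothesis uniformly over all choices of $W$.

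First, I would condition on the event (of probability tending to $1$) that $\lat$ satisfies the conclusion of Theorem~\ref{thm:det-lb-haar}, so that $\det(\lat \cap W')^{1/\dim(W')} \geq c_{1} \cdot n^{(1-\dim(W')/n)/2}$ for \emph{every} lattice subspace $W'$ of $\lat$. Now fix an arbitrary $n/2$-dimensional lattice subspace $W$ of $\lat$. The intersection $\lat \cap W$ is a lattice of rank $n/2$ living inside $W \cong \R^{n/2}$.

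Next, I would verify the hypothesis of Theorem~\ref{thm:RM} for this lattice with the value $R = c_{1} \cdot n^{1/4}$. Any lattice subspace $W''$ of $\lat \cap W$ is in particular a lattice subspace of $\lat$ contained in $W$, hence $\dim(W'') \leq n/2$ and $(\lat \cap W) \cap W'' = \lat \cap W''$. Applying the assumed bound to $W''$ gives
\[
\det((\lat \cap W) \cap W'')^{1/\dim(W'')} \geq c_{1} \cdot n^{(1-\dim(W'')/n)/2} \geq c_{1} \cdot n^{1/4} = R \; ,
\]
since $\dim(W'')/n \leq 1/2$. So the hypothesis of Theorem~\ref{thm:RM} holds for $\lat \cap W$ with parameter $R$.

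Finally, I would apply Theorem~\ref{thm:RM} to $\lat \cap W$, regarded as a lattice in the $n/2$-dimensional Euclidean space $W$. The key observation is that $\lat \cap W \cap \Ball{r}$ is the same set as the intersection of $\lat \cap W$ with the Euclidean ball of radius $r$ in $W$, since points of $W$ have the same Euclidean norm whether measured in $\R^n$ or in $W$. Theorem~\ref{thm:RM} thus yields, for every $r \geq R = c_{1} n^{1/4}$,
\[
|\lat \cap W \cap \Ball{r}| \leq (3/2) \exp\bigl(500 (\log(n/2) \cdot r/R)^2\bigr) \leq (3/2) \exp\bigl(500 (\log n \cdot r/(c_{1} n^{1/4}))^2\bigr) \; ,
\]
using $\log(n/2) \leq \log n$. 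There is no real obstacle here; the only minor subtlety is keeping track of the two roles of $W$ (as an ambient space for $\lat \cap W$ and as a subspace of $\R^n$) and verifying that lattice subspaces of $\lat \cap W$ inherit the determinant lower bound from lattice subspaces of $\lat$.
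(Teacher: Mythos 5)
Your proposal is correct and is exactly the argument the paper intends (the paper simply states that the corollary follows "by combining" Theorems~\ref{thm:det-lb-haar} and~\ref{thm:RM} without writing out the details): condition on the conclusion of Theorem~\ref{thm:det-lb-haar}, note that lattice subspaces of $\lat \cap W$ are lattice subspaces of $\lat$ of dimension at most $n/2$ so the hypothesis of Theorem~\ref{thm:RM} holds with $R = c_1 n^{1/4}$, and apply the reverse Minkowski theorem to $\lat \cap W$ inside $W$. The care you take with the two roles of $W$ and with $\log(n/2) \le \log n$ is exactly right.
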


\section{Proof of the main theorem}

We will need the following point-counting version of Minkowski's
first theorem due to Blichfeldt and van der Corput (see \cite[Thm.~1 of Ch.~2, Sec.~7]{Lekkerkerker_book}).

\begin{lemma}[\cite{vanderCorput1936}]
	\label{lem:minkowski}
For any lattice $\lat \subset \R^n$ with $\det(\lat) \leq 1$ and $r > 0$,
\begin{equation*}
|\lat \cap \Ball{r}|
\geq 2^{-n} \cdot \vol(\Ball{r}) =  \frac{1}{\sqrt{\pi n}} \Big(\frac{\pi e r^2}{2 n}  \Big)^{n/2} (1+o(1))
\; .
\end{equation*}
\end{lemma}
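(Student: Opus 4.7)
The plan is to follow the classical Blichfeldt--van der Corput argument, whose key move is a halving trick that converts a volumetric pigeonhole into a lower bound for $|\lat \cap \Ball{r}|$. First I would establish a generalized Blichfeldt pigeonhole: for any measurable $S \subset \R^n$ there exists a point $x_0 \in \R^n$ such that the translate $S - x_0$ contains at least $\lceil \vol(S)/\det(\lat)\rceil$ lattice points. The standard proof is by averaging: if $F$ is a fundamental domain of $\lat$, so that $\vol(F) = \det(\lat)$, then $\int_F \sum_{v \in \lat} \mathbf{1}_S(x+v)\, dx = \vol(S)$, so some $x_0 \in F$ realizes at least the average value $\vol(S)/\det(\lat)$.

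Next I would apply the pigeonhole with $S = \Ball{r/2}$ and extract distinct lattice vectors $v_1, \ldots, v_k$ with $x_0 + v_i \in S$ and
$k \;\geq\; \vol(\Ball{r/2})/\det(\lat) \;=\; 2^{-n}\, \vol(\Ball{r})/\det(\lat).$
The differences $v_i - v_1$ for $i = 1, \ldots, k$ are then $k$ distinct lattice vectors lying in $S - S$. Since $\Ball{r/2}$ is centrally symmetric, the triangle inequality gives $\Ball{r/2} - \Ball{r/2} \subseteq \Ball{r}$, so $|\lat \cap \Ball{r}| \geq k$. Combining this with the hypothesis $\det(\lat) \leq 1$ yields the stated inequality $|\lat \cap \Ball{r}| \geq 2^{-n} \vol(\Ball{r})$.

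Finally, the explicit asymptotic form follows by substituting $\vol(\Ball{r}) = \pi^{n/2} r^n / \Gamma(n/2+1)$ and applying Stirling in the form $\Gamma(n/2+1) = \sqrt{\pi n}\, (n/(2e))^{n/2}(1 + o(1))$. A short manipulation turns $2^{-n} \vol(\Ball{r})$ into $\tfrac{1}{\sqrt{\pi n}}(\pi e r^2/(2n))^{n/2}(1 + o(1))$, matching the claim. There is no serious obstacle in this argument; the only subtle point is to run the pigeonhole on $\Ball{r/2}$ rather than $\Ball{r}$, so that the differences of lattice points land back inside $\Ball{r}$ rather than in $\Ball{2r}$. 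It is precisely this halving that forces the loss of the factor $2^{-n}$ on the right-hand side.
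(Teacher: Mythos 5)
Your argument is correct: the averaging/pigeonhole step over a fundamental domain, the halving trick on $\Ball{r/2}$ with differences landing in $\Ball{r}$, and the Stirling computation of $2^{-n}\vol(\Ball{r})$ all check out. The paper itself states this lemma without proof, citing Blichfeldt and van der Corput, and what you have written is precisely that classical argument, so there is nothing to reconcile.
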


\begin{proof}[Proof of Theorem~\ref{thm:main}]
By Lemma~\ref{lem:minkowski} applied to $A=\lat \cap \Ball{n^{5/8}}$, if $n$ is large enough then
\begin{equation}
\label{eq:minkbound}
|A| \ge n^{n/8} \; .
\end{equation}
Let $G$ be a $d$-dimensional GAP as in Eq.~\eqref{eq:gap} where $d \le c n$.
Assume without loss of generality that the indices $i$
are sorted in non-increasing order of $b_i-a_i$.
For any $t=(t_{n/2},\ldots,t_d)$ where $a_i \le t_i \le b_i$, consider the restriction of the GAP obtained by fixing all but its first $n/2-1$ dimensions,
$$
G_t = \set[\Big]{x_0+\sum_{i=1}^{n/2-1} \alpha_i x_i + \sum_{i=n/2}^{d} t_i x_i:
a_i \le \alpha_i \le b_i, \; \alpha_i \in \Z}.
$$
Let $W_t$ be an $n/2$-dimensional linear subspace containing $G_t$.
By Corollary~\ref{cor:pointcountingrandom},
\[
|A \cap G_t| \le |\lat \cap W_t \cap \Ball{n^{5/8}}| \le (3/2) \exp(500  (\log n/c_1)^2 n^{3/4}) \le 2^n,
\]
where the last inequality assumes that $n$ is large enough. Therefore,
\begin{align*}
|A \cap G| &\le \sum_t |A \cap G_t| \\
&\le 2^n \cdot \prod_{i=n/2}^d (b_i-a_i+1)  \\
&\le 2^n \cdot |G|^{1-(n/2-1)/d}  \\
&\le 2^n \cdot |A|^{1-(n/2-1)/d} \\
&\le (2^n/|A|^{n/(3d)}) \cdot |A|.
\end{align*}
To conclude the proof note that $|A|^{n/(3d)} \ge |A|^{1/(3 c)} \ge n^{n/(24 c)}$ by Eq.~\eqref{eq:minkbound}. Therefore $|A \cap G|/|A| \le n^{-n/(25c)}$
assuming $n$ is large enough.
\end{proof}

\section*{Acknowledgment}
We thank Daniel Dadush for useful discussions. 


\providecommand{\bysame}{\leavevmode\hbox to3em{\hrulefill}\thinspace}
\providecommand{\MR}{\relax\ifhmode\unskip\space\fi MR }
\providecommand{\MRhref}[2]{%
  \href{http://www.ams.org/mathscinet-getitem?mr=#1}{#2}
}
\providecommand{\href}[2]{#2}

\begin{dajauthors}
\begin{authorinfo}[lovett]
  Shachar Lovett\\
  Computer Science Department\\
	UC San Diego\\
  slovett\imageat{}ucsd\imagedot{}edu \\
  \url{http://cseweb.ucsd.edu/~slovett/home.html}
\end{authorinfo}
\begin{authorinfo}[regev]
  Oded Regev\\
  Courant Institute of Mathematical Sciences\\
	New York University\\
  \url{http://www.cims.nyu.edu/~regev/}
\end{authorinfo}
\end{dajauthors}

\end{document}